\newtheorem{theorem}{Theorem}[section]
\newtheorem{lemma}[theorem]{Lemma}
\newtheorem{definition}{Definition}
\newtheorem{conjecture}[theorem]{Conjecture}
\newtheorem{problem}[theorem]{Problem}
\title{A generalization of Erd\H{o}s' matching conjecture}
\author{Christos Pelekis \footnotemark[1]
\and
Israel Rocha\thanks{The Czech Academy of Sciences, Institute of Computer Science, Pod Vod\'{a}renskou v\v{e}\v{z}\'{\i} 2, 182 07 Prague, Czech Republic. 
The authors are supported by the Czech Science Foundation, grant number GJ16-07822Y, with institutional support RVO:67985807. E-mails: pelekis.chr@gmail.com;  israelrocha@gmail.com} }
\begin{document}
\maketitle
	
\begin{abstract} 
Let $\mathcal{H}=(V,\mathcal{E})$ be an $r$-uniform hypergraph on $n$ vertices and fix a positive integer $k$ such that $1\le k\le r$. A $k$-\emph{matching} of $\mathcal{H}$ is a collection of edges $\mathcal{M}\subset \mathcal{E}$ such that every subset of $V$ whose cardinality equals $k$ is contained in at most one element of $\mathcal{M}$. The $k$-matching number of $\mathcal{H}$ is the maximum cardinality of a $k$-matching. 
A well-known problem, posed by Erd\H{o}s, asks for the maximum number of edges in an $r$-uniform hypergraph under constraints on its $1$-matching number. In this article we investigate the more general problem of determining the maximum number of edges in an $r$-uniform hypergraph on $n$ vertices subject to the constraint that its $k$-matching number is strictly less than $a$. The problem can also be seen as a generalization of the, well-known, $k$-intersection problem. 
We propose candidate hypergraphs for the solution of this problem, and show that the extremal hypergraph is among this candidate set when $n\ge 4r\binom{r}{k}^2\cdot a$. 
\end{abstract}

\noindent {\emph{Keywords}: Erd\H{o}s' matching conjecture; hypergraphs; complete intersection theorem

\section{Prologue, related work and main results}

A hypergraph, $\mathcal{H}$, is a pair $(V,\mathcal{E})$ where $V$ is a finite set, 
called the \emph{vertex set}, and $\mathcal{E}$ is a collection of subsets of $V$. 
The set $\mathcal{E}$ is called the \emph{edge set} and its elements \emph{edges}. 
We denote by $\binom{V}{k}$ the family consisting of all subsets of $V$ whose cardinality equals $k$. 
A hypergraph is called $r$-uniform if all of its edges have cardinality $r$. 
A hypergraph is called $k$-\emph{intersecting} if the intersection of any two of its edges has cardinality at least $k$. Given a finite set, $F$, we denote by $|F|$ its cardinality and, given a positive integer $m$, we denote by $[m]$ the set $\{1,\ldots,m\}$. A finite set whose cardinality equals $m$ is refer to as an $m$-\emph{set}, for short. 
As an abuse of notation, we sometimes denote by $|\mathcal{H}|$ the number of edges in a hypergraph $\mathcal{H}$. 
A \emph{matching} in a hypergraph, $\mathcal{H}$, is a family of pairwise disjoint edges. The \emph{matching number}
of $\mathcal{H}$, denoted $\nu(\mathcal{H})$,  is the maximum cardinality of a matching. 
The notion of matching is fundamental in combinatorics. Its 
significance is supported by the fact that several combinatorial problems can be reduced to the problem of determining the matching number of appropriate hypergraphs. 

An important problem regarding matchings in uniform hypergraphs was posed by Erd\H{o}s in $1965$, who asked for the determination of the maximum number of edges in an $r$-uniform hypergraph under constraints on its matching number. More precisely, let $\mathcal{H}$ be an $r$-uniform hypergraph on $n$ vertices which satisfies $\nu(\mathcal{H})<a\leq \frac{n}{r}$. 
What is a sharp upper bound on the number of edges in $\mathcal{H}$? 

Erd\H{o}s conjectured that the maximum is attained by two extremal hypergraphs. The first is the hypergraph 
$\mathcal{H}_1$ consisting of all $r$-sets on $ra-1$ vertices, whose matching number is clearly $a-1$. The second one is the $r$-uniform hypergraph, $\mathcal{H}_2$, on $n$ vertices whose edge set consists of all $r$-sets that contain at least one element from a fixed set of $a-1$ vertices, and whose matching number is $a-1$ as well. \\

\begin{conjecture}[Erd\H{o}s' Matching Conjecture, 1965]
\label{conj:1} The number of edges in an $r$-uniform hypergraph, $\mathcal{H}$, on $n$ vertices whose matching number satisfies $\nu(\mathcal{H}) < a\leq \frac{n}{r}$ is at most 
\[ \max\left\{ |\mathcal{H}_1| , |\mathcal{H}_2| \right\}   . \]
\end{conjecture}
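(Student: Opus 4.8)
Conjecture~\ref{conj:1} is a famous open problem, and I would not expect to settle it in full; the realistic target --- and the one relevant to the hypergraph bounds this paper is after --- is the range in which $n$ is large relative to $r$ and $a$, where $|\mathcal{H}_2|\ge |\mathcal{H}_1|$ and hence the conjectured extremal number equals $|\mathcal{H}_2|=\binom{n}{r}-\binom{n-a+1}{r}$. So the plan is: given an $r$-uniform $\mathcal{H}$ on $n$ vertices with $|\mathcal{H}|>\binom{n}{r}-\binom{n-a+1}{r}$, produce $a$ pairwise disjoint edges. The first move is to \emph{compress}: for $i<j$ the shift $S_{ij}$ (replace an occurrence of $j$ in an edge by $i$ whenever the result is not already an edge) preserves $r$-uniformity and $|\mathcal{H}|$ and does not increase $\nu(\mathcal{H})$, so after iterating I may assume $\mathcal{H}$ is \emph{shifted}.

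Next I would exploit the rigidity of shifted families. If every edge meets $[a-1]$ then $|\mathcal{H}|\le \binom{n}{r}-\binom{n-a+1}{r}$, a contradiction; hence some edge avoids $[a-1]$, and by shiftedness the ``initial'' such edge $\{a,a+1,\dots,a+r-1\}$ belongs to $\mathcal{H}$ (here $n\ge a+r-1$). I would then try to build the matching greedily: delete this edge, restrict to the $n-r$ remaining vertices, and argue by induction on $a$ that the surviving hypergraph still has more than $\binom{n-r}{r}-\binom{n-r-a+2}{r}$ edges, hence a matching of size $a-1$. The main obstacle is exactly this inductive step: a crude bound on the number of edges destroyed by deleting $r$ vertices ($\le r\binom{n-1}{r-1}$) is of the same order as the main terms, so it does not close the induction. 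The fix is Frankl's sharper counting for shifted families --- roughly, that a shifted $\mathcal{H}$ carrying an edge disjoint from $[a-1]$ together with a supercritical number of edges contains, after one downward shift, a strictly smaller instance of the same extremal problem --- and even with it the argument only works once $n$ is a large multiple of $ra$; lowering the threshold to the conjectured $n\ge ra$ is the part that is genuinely open.

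For the $k$-matching generalisation the architecture carries over (shifting still does not increase the $k$-matching number), but with two changes. The family of all $r$-sets through a fixed $(a-1)$-set is only one of several competing constructions, which is why one can at best pin down a \emph{candidate set}; and the intersecting core left after shifting must now be handled by the Ahlswede--Khachatrian complete intersection theorem --- note that the case $a=2$ of the general problem is precisely the $k$-intersection problem. The hypothesis $n\ge 4r\binom{r}{k}^2 a$ is what makes the edge-count estimates strong enough to separate the true optimum from the other candidates; as in the classical case, the crux is the greedy/inductive extraction of the $k$-matching together with the counting inequality that accompanies it.
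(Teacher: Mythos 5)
This statement is Erd\H{o}s' Matching Conjecture itself, which the paper states as an open problem and does not prove; there is no ``paper proof'' to match against, and you are right not to claim one. Your sketch is an honest outline of the known partial results rather than a proof, and as such it contains the expected and openly admitted gap: the inductive step after deleting an edge disjoint from $[a-1]$ does not close with the crude count $r\binom{n-1}{r-1}$, and the sharper shifted-family counting you invoke from Frankl is precisely the nontrivial content you would need to supply. So as a proof of the stated conjecture the proposal is incomplete by design, and that is the correct assessment of the situation.

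Where your sketch diverges from what the paper actually does is in the last paragraph, about the $k$-matching generalisation. The paper's large-$n$ result (Theorem \ref{thm:3}) is \emph{not} obtained by shifting. It imitates Erd\H{o}s' original degree-dichotomy argument: pick a $k$-set $T_1$ of maximum degree $d(T_1)$; if $d(T_1) < \frac{1+g(n,r,k,a)}{(a-1)\binom{r}{k}}$ a maximal $k$-matching must already have size at least $a$ by a covering count; if $d(T_1)$ is large, delete all edges containing $T_1$, apply induction on $a$ (with the Ahlswede--Khachatrian theorem as the base case $a=2$, as you correctly anticipate), and then count $r$-sets through $T_1$ avoiding the at most $(a-1)\binom{r}{k}$ forbidden $k$-sets to extend the matching --- this is where the hypothesis $n\ge 4r\binom{r}{k}^2 a$ enters. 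Moreover, the paper explicitly remarks in its concluding section that although the $(i,j)$-shift does not increase $\nu_k$, the authors were unable to adapt standard shifting arguments to the $k$-matching setting, because the relevant quantities are the families $\mathcal{H}[T]$ and $\mathcal{H}(T)$ for $k$-sets $T$ rather than single vertices. So your assertion that ``the architecture carries over'' via shifting is optimistic: the compression step survives, but the stability and counting machinery that makes shifting effective in the $k=1$ case is exactly what is missing, and the paper sidesteps it entirely by the degree-dichotomy route.
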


When $n\ge (r+1)\cdot a$ is it not difficult to see that $|\mathcal{H}_2| \ge |\mathcal{H}_1|$ 
and therefore, in this case, $\mathcal{H}_2$ is the hypergraph which is conjectured to have the maximum number of edges among all hypergraphs satisfying the assumptions of Conjecture \ref{conj:1}. 
Erd\H{o}s obtained the following result. \\

\begin{theorem}[Erd\H{o}s \cite{Erdos}]
\label{thm:1} There exists some constant $c_{r}$, which depends only on $r$, such that among all $r$-uniform hypergraphs on $n> c_r\cdot a$ vertices that satisfy $\nu(\mathcal{H}) < a$, the hypergraph $\mathcal{H}_2$ has the maximum number of edges. 
\end{theorem}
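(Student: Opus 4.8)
The plan is to establish the upper bound $|\mathcal H|\le|\mathcal H_2|=\binom{n}{r}-\binom{n-a+1}{r}$ for every $r$-uniform $\mathcal H$ on $n>c_r a$ vertices with $\nu(\mathcal H)<a$; this suffices, since $\mathcal H_2$ is itself admissible (with $\nu(\mathcal H_2)=a-1$) and $|\mathcal H_2|\ge|\mathcal H_1|$ throughout this range, by the elementary computation indicated after Conjecture~\ref{conj:1}. I would argue by induction on $a$, the case $a=1$ being immediate since then $\mathcal H$ is edgeless and $|\mathcal H_2|=0$.

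The engine of the induction is a lemma about one high-degree vertex: \emph{if $x$ is a vertex with $d(x)>r(a-1)\binom{n-2}{r-2}$, then the subfamily $\mathcal H-x$ of edges avoiding $x$ satisfies $\nu(\mathcal H-x)\le a-2$.} To prove it, suppose $\mathcal H-x$ contained a matching $B_1,\dots,B_{a-1}$; then every edge through $x$ must meet $W:=B_1\cup\cdots\cup B_{a-1}$, since otherwise it would extend the matching to size $a$. As $|W|=r(a-1)$ and $x\notin W$, the number of $r$-sets containing $x$ that meet $W$ equals $\binom{n-1}{r-1}-\binom{n-1-r(a-1)}{r-1}\le r(a-1)\binom{n-2}{r-2}<d(x)$, a contradiction.

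For the inductive step, let $\Delta$ be the maximum degree of $\mathcal H$ and split into two cases. If $\Delta>r(a-1)\binom{n-2}{r-2}$, pick $x$ with $d(x)=\Delta$; the lemma gives $\nu(\mathcal H-x)<a-1$, so (as $\mathcal H-x$ lives on $n-1>c_r(a-1)$ vertices) the inductive hypothesis yields $|\mathcal H-x|\le\binom{n-1}{r}-\binom{n-a+1}{r}$, and since $d(x)\le\binom{n-1}{r-1}$, Pascal's identity gives $|\mathcal H|=|\mathcal H-x|+d(x)\le\binom{n}{r}-\binom{n-a+1}{r}$ with no loss. If instead $\Delta\le r(a-1)\binom{n-2}{r-2}$, fix a maximum matching of $\mathcal H$ with vertex set $S$, so $|S|=r\,\nu(\mathcal H)\le r(a-1)$; every edge meets $S$, hence $|\mathcal H|\le\sum_{v\in S}d(v)\le|S|\,\Delta\le r^2(a-1)^2\binom{n-2}{r-2}$, and a short computation — comparing this with $|\mathcal H_2|\ge(a-1)\binom{n-a+1}{r-1}$ and expanding binomials — shows it is at most $|\mathcal H_2|$ once $n\ge c_r a$ for a suitable explicit $c_r$ depending only on $r$ (of order $r^2 2^r$; the precise value is unimportant, and one also needs merely $c_r\ge2$ so that $n-1>c_r(a-1)$ in the first case).

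The step I expect to be the real obstacle is the low-degree case: the estimate $|\mathcal H|\le|S|\,\Delta$ overcounts by a factor of order $r$ against the truth, so this approach cannot reach the conjectural threshold $n\ge(r+1)a$ — only a linear-in-$a$ bound with an $r$-dependent constant. The rest is bookkeeping, the one delicate point being that the ``heavy vertex'' threshold $r(a-1)\binom{n-2}{r-2}$ is chosen to be exactly the largest degree a vertex can have when deleting it does \emph{not} drop the matching number, which is precisely what makes the Pascal computation in the first case close without slack.
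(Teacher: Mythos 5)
Your argument is correct, but it is not the route Erd\H{o}s took (the paper only cites Theorem~\ref{thm:1}; its Section~\ref{sect:3} imitates that original proof for the $k$-matching generalization, so that is the natural point of comparison). Both proofs are an induction on $a$ with a case split at a maximum degree, and your low-degree case is the same idea as the paper's (every edge meets the vertex set $S$ of a maximal matching, so $|\mathcal{H}|\le |S|\Delta$); but the high-degree cases genuinely differ. Erd\H{o}s (and the paper, with a maximum-degree $k$-set $T_1$ in place of a vertex) keeps the vertex set fixed, discards only the edges through $T_1$, applies the induction hypothesis on $a$ to the remainder to extract a matching of size $a-1$, and then must \emph{re-insert} an edge through $T_1$ disjoint from that matching via a counting estimate --- this extension step is exactly where the quantitative condition on $n$ (in the paper, $n\ge 4r\binom{r}{k}^2 a$) enters, while the low-degree case is immediate from the choice of threshold $d(T_1)<\frac{1+g}{(a-1)\binom{r}{k}}$. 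You instead delete the heavy vertex outright, inducting on $n$ and $a$ simultaneously, so the high-degree case closes exactly by Pascal's identity with no extension argument and no loss; the price is that all the numerical work migrates to the low-degree case, where $|\mathcal{H}|\le r^2(a-1)^2\binom{n-2}{r-2}$ must be compared with $|\mathcal{H}_2|\ge (a-1)\binom{n-a+1}{r-1}$. That comparison, which you leave as ``a short computation,'' does go through: each of the $r-2$ ratios $\frac{n-a+1-i}{n-2-i}$ is at least $1-\frac{a+r}{n}$, so for $n\ge 2(r-2)(a+r)$ one gets $\binom{n-a+1}{r-1}\ge \frac{n-a-r}{2(r-1)}\binom{n-2}{r-2}$, and $c_r$ of order $r^3$ (far smaller than your $r^2 2^r$) suffices. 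So both approaches deliver a linear threshold $n>c_r a$ with $c_r$ depending only on $r$; yours is slightly more self-contained (no re-insertion lemma, exact bookkeeping via Pascal), while the Erd\H{o}s-style argument is the one that the paper can push to $k$-matchings, since deleting a single vertex does not interact well with degrees of $k$-sets.
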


The problem of determining the smallest value of $c_{r}$ has attracted considerable attention (see \cite{Alonetal, Frankl, Frankl16, Frankl166, Frankletal, FranklRodlRucinski}, among several others). The current best known upper bound  on this constant is $c_r\le 2r+1$, and is due to Frankl \cite{Frankl}. 
Let us also remark that Erd\H{o}s' matching conjecture, if true, has implications in game theory (see \cite{Fokkink}), distributed storage allocation (see \cite[Section 5]{Alonetal}) as well as in probability theory (see \cite{Alonetal, matas}).

In this article we shall be interested in a generalization of Erd\H{o}s' conjecture.  Our work is motivated by   
the following notion of matchings in hypergraphs. \\

\begin{definition}[$k$-matching]
\label{defn:1}
Let $\mathcal{H}=(V,\mathcal{E})$ be an $r$-uniform hypergraph on $n$ vertices and fix a positive integer 
$k$ such that $1\le k\le r$. A $k$-\emph{matching} of $\mathcal{H}$ is a collection of edges $E_{1},\ldots,E_{j}\in\mathcal{E}$ such that every $T\in\binom{V}{k}$ is contained in at most one $E_{i}, i\in\{1,\ldots,j\}$. The maximum cardinality of a $k$-matching in a hypergraph, $\mathcal{H}$, is its $k$-\emph{matching number} and is denoted by $\nu_{k}(\mathcal{H})$. 
\end{definition}

Equivalently, a $k$-matching of $\mathcal{H}=(V,\mathcal{E})$ is a subset $\mathcal{M}\subset \mathcal{E}$ such that $|E_i\cap E_j|\le k-1$, for all $E_i\neq E_j$ from $\mathcal{M}$. 
Let us mention that the notion of $k$-matching arose in the study of certain search games on hypergraphs (see \cite[Appendix C]{thesis}) as well as in the study of certain generalisations of Tuza's conjecture (see \cite{Aharoni}). 

Notice that a $1$-matching of a hypergraph coincides with a matching.  Notice also that when $\nu_k(\mathcal{H})=1$, then any two edges, say $E_1,E_2$, in $\mathcal{H}$ satisfy 
$|E_1\cap E_2|\ge k$ and therefore the problem of maximizing the number of edges in a $r$-uniform hypergraph whose $k$-maching number equals $1$ is equivalent to the problem of maximizing the number of edges in an $r$-uniform $k$-intersecting hypergraph, which we refer in this paper as the $k$-\emph{intersection problem}. 
This problem, having been open for several decades, was proven to be of great importance in the development of extremal set theory (see \cite{FranklToku, Katona}) and was finally resolved by  Ahlswede and Khachatrian  (see \cite{AK}). In particular, the following holds true. \\

\begin{theorem}[see \cite{AK, Wilson}]
\label{EKR}
Let $1\leq k< r$. 
For every positive integer $i$ such that $0\le i \le\frac{n-k}{2}$, let $\mathcal{B}_i$ be the family consisting of all $E\in\binom{[n]}{r}$ such that 
$|E\cap [k+2i]| \ge k+i$. The number of edges in an $r$-uniform $k$-intersecting hypergraph on $n$ vertices is at most 
\[ \max_{0\le i \le\frac{n-k}{2}}\; |\mathcal{B}_i| . \]
Moreover, when $n\ge (k+1)(r-k-1)$  
the number of edges in an $r$-uniform $k$-intersecting hypergraph on $n$ vertices is at most $\binom{n-k}{r-k}$.  
\end{theorem}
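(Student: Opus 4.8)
The plan is to prove the two assertions in turn, with the compression (shifting) method serving as a common first step. Write $\mathcal{F}\subseteq\binom{[n]}{r}$ for a $k$-intersecting family of maximum size. First I would invoke the $(i,j)$-shift operator $S_{ij}$, $i<j$: replace each $E\in\mathcal{F}$ with $j\in E$, $i\notin E$ by $(E\setminus\{j\})\cup\{i\}$ whenever the latter is not already present. This preserves both $r$-uniformity and $|\mathcal{F}|$, and — this is the one point that needs a short verification — it preserves the $k$-intersecting property: if $A',B'$ arise from $S_{ij}$ and are claimed to meet in at most $k-1$ points, one checks the only dangerous case is when exactly one of them was shifted, say $A'=(A\setminus\{j\})\cup\{i\}$ with $j\in B$, $i\notin B$, and then the set $B''=(B\setminus\{j\})\cup\{i\}$ must already lie in $\mathcal{F}$ (since $B$ was left unchanged), so $|A\cap B''|\ge k$ forces $|A'\cap B'|\ge k$. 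Iterating $S_{ij}$ over all pairs until stable, we may therefore assume $\mathcal{F}$ is \emph{shifted}: $E\in\mathcal{F}$, $i<j$, $j\in E$, $i\notin E$ imply $(E\setminus\{j\})\cup\{i\}\in\mathcal{F}$.

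The core of the argument — and the step I expect to be the main obstacle — is showing that among shifted $k$-intersecting families the extremum is attained by one of the ``nested'' families $\mathcal{B}_i$. Here I would run the Ahlswede--Khachatrian \emph{pushing--pulling} scheme together with an induction on $n$: split $\mathcal{F}$ into $\mathcal{F}_0=\{E\in\mathcal{F}:n\notin E\}$, a $k$-intersecting family on $[n-1]$, and the link $\mathcal{F}_1=\{E\setminus\{n\}:n\in E\in\mathcal{F}\}$, which is $(k-1)$-intersecting, subject to the cross-condition $|A\cap B|\ge k$ for $A\in\mathcal{F}_0$, $B\in\mathcal{F}_1$. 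Applying the inductive bound to each piece, and performing local edge-exchanges between ``levels'' indexed by $|E\cap[m]|$ for suitable $m$ that never decrease $|\mathcal{F}|$ nor break the $k$-intersecting condition, one is driven to a configuration of the form $\mathcal{B}_i$. Controlling the interaction of $\mathcal{F}_0$ with $\mathcal{F}_1$ and verifying that the exchange operations terminate at a genuine $\mathcal{B}_i$ rather than some intermediate family is exactly the delicate case analysis that kept the Complete Intersection Theorem open for decades, so an honest sketch of this step is really a sketch of the AK induction and nothing lighter.

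Finally I would identify which $\mathcal{B}_i$ wins. Computing
\[ |\mathcal{B}_i| \;=\; \sum_{j\ge k+i}\binom{k+2i}{j}\binom{n-k-2i}{\,r-j\,}, \]
one compares consecutive terms: $|\mathcal{B}_{i+1}|-|\mathcal{B}_i|$ is, for fixed $r,k,i$, a polynomial in $n$ whose leading behaviour is negative, and a routine estimate shows the sign has already turned by the time $n$ reaches the stated bound, so in that range the maximum is $|\mathcal{B}_0|=\binom{n-k}{r-k}$, realized by the family of all $r$-sets containing a fixed $k$-set. As an alternative route to the large-$n$ assertion alone, one can bypass the second paragraph entirely: a $k$-intersecting family is an independent set in the graph on $\binom{[n]}{r}$ joining $r$-sets that meet in at most $k-1$ points, and substituting the known Eberlein-polynomial eigenvalues of the relevant union of Johnson-scheme graphs into the Hoffman ratio bound yields exactly $\binom{n-k}{r-k}$ in the stated range, with $\mathcal{B}_0$ as equality case; there the only obstacle is the eigenvalue bookkeeping.
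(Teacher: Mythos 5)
This theorem is not proved in the paper at all: it is the Complete Intersection Theorem of Ahlswede--Khachatrian together with the Wilson/Frankl bound for the trivial family, and the paper imports it wholesale from \cite{AK, Wilson} to serve as the base case of the induction in Theorem \ref{thm:3}. So there is no ``paper proof'' for your argument to parallel, and the fair question is whether your outline would stand on its own. It does not: your shifting step is fine (the verification that $S_{ij}$ preserves the $k$-intersecting property is correct as you wrote it), but the heart of the matter --- showing that a shifted $k$-intersecting family is dominated by some $\mathcal{B}_i$ --- is exactly the Ahlswede--Khachatrian generating-sets/pushing--pulling argument, and you say yourself that your sketch of it ``is really a sketch of the AK induction and nothing lighter.'' Naming that induction is not supplying it; as a blind proof attempt this is a genuine gap, and the honest alternatives are either to reproduce the AK argument in full or to do what the paper does and cite it.

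One further concrete problem: your claim that the comparison of the $|\mathcal{B}_i|$ ``has already turned by the time $n$ reaches the stated bound'' cannot be right for the bound as printed. The correct threshold for the trivial family $\mathcal{B}_0$ to be optimal is $n\ge (k+1)(r-k+1)$, not $(k+1)(r-k-1)$ (the latter is evidently a typo in the paper). For example, with $r=5$, $k=1$, $n=8$ we have $n\ge (k+1)(r-k-1)=6$, yet every two $5$-subsets of $[8]$ intersect, so the whole of $\binom{[8]}{5}$, of size $56$, is $1$-intersecting and exceeds $\binom{n-k}{r-k}=\binom{7}{4}=35$. So the ``routine estimate'' you invoke would have to fail in that range; your alternative route via the Hoffman ratio bound in the Johnson scheme (Wilson's method) is the right tool for the second assertion, but again only at the corrected threshold, and the eigenvalue computation you defer is precisely the content of \cite{Wilson}. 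None of this affects the paper's use of the theorem, since Theorem \ref{thm:3} only invokes it for $n\ge 4r\binom{r}{k}^2\cdot a$, which is comfortably above the true threshold.
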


The families $\mathcal{B}_i$, defined in Theorem \ref{EKR}, are referred to as \emph{Frankl families} (see \cite{FranklToku}). 
In this work we address the problem of determining the  number of edges in an $r$-uniform hypergraph on $n$ vertices under constraints on its $k$-matching number. More precisely, we examine the following. \\

\begin{problem}
\label{prbl:1}
Let $\mathcal{H}=(V,\mathcal{E})$ be an $r$-uniform hypegraph on $n$ vertices. Fix a positive integer $k$ such that 
$1\le k\le r$ and assume further that  $\nu_k(\mathcal{H}) < a$. What is a sharp upper bound on the number of edges in $\mathcal{H}$?
\end{problem} 

Later, after constructing suitable candidates for the extremal graph of Problem \ref{prbl:1}, we will formulate a generalization of Erd\H{o}s' conjecture which is the main target of this note. Then we verify the conjecture for large values of $n$.

Note that  Problem \ref{prbl:1} is trivial when $k=r$: the maximum number of edges in $\mathcal{H}$ is equal to $\nu_r(\mathcal{H})$. Hence, from now on, we assume that $k<r$. 
Notice also that in Erd\H{o}s' matching conjecture, the extremal hypergraph $\mathcal{H}_2$  is obtained by fixing $a-1$ vertices, say $v_1,\ldots,v_{a-1}$, and then taking all $r$-sets of the vertex set, $V$, that contain at least one of the vertices $v_i,i=1,\ldots,a-1$. 
In the same way, candidates for the extremal hypergraph in Problem \ref{prbl:1} can be obtained as follows. 
 
Fix a set $V$ consisting of $n$ vertices. For every positive integer $i$ such that $0\le i \le \frac{\lfloor \frac{n}{a-1}\rfloor -k}{2}$ and every  family 
$\mathcal{T}=\{T_1,\ldots, T_{a-1}\}\subset \binom{V}{k+2i}$ let $\mathcal{H}(\mathcal{T})$ denote the 
$r$-uniform hypergraph on the vertex set $V$ whose edge set consists of all $E\in \binom{V}{r}$ such that $|E\cap T_j|\ge k+i$, for some $j\in [a-1]$. Notice that the $(k+2i)$-sets $T_j$ need not be disjoint. 
In fact, there are several ways to choose the sets $T_1,\ldots,T_{a-1}$. 
When the sets $T_1,\ldots,T_{a-1}\in \binom{V}{k+2i}$ are \emph{pairwise disjoint}, 
we will refer to the hypergraph $\mathcal{H}(\mathcal{T})$ as a $(n,r,k,a,i)$-\emph{Frankl family}. 
Notice that $(n,r,1,a,i)$-Frankl families are precisely the Frankl families from Theorem \ref{EKR}.    
Observe also that a $(n,r,1,a,0)$-Frankl family is precisely  the hypergraph $\mathcal{H}_2$ in Erd\H{o}s' matching conjecture. 
The following result implies that 
the number of edges in $\mathcal{H}(\mathcal{T})$ is maximized when the sets are disjoint. \\

\begin{theorem}
\label{thm:2}
Let $V=[n]$ be a set of vertices and fix positive integers $r,k,a$ such that $n\ge ra$ and 
$r> k\ge1$. For every positive integer $i$ such that $0\le i \le \frac{\lfloor \frac{n}{a-1}\rfloor -k}{2}$ and every  $\mathcal{T}=\{T_1,\ldots,T_{a-1}\} \subset \binom{V}{k+2i}$ let 
$\mathcal{H}(\mathcal{T})$ be the $r$-uniform hypergraph on the vertex set $V$ whose edge set 
consists of all $E\in \binom{V}{r}$ such that $|E\cap T_j|\ge k+i$, for some $j\in [a-1]$. Then the 
number of edges in $\mathcal{H}(\mathcal{T})$ is less than or equal to the number of edges in a $(n,r,k,a,i)$-Frankl family.  
\end{theorem}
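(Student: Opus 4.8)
Write $s=k+2i$ and $\tau=k+i$, and for a set $T\in\binom{V}{s}$ put $\mathcal{A}(T)=\{E\in\binom{V}{r}:|E\cap T|\ge \tau\}$, so that $\mathcal{H}(\mathcal{T})=\bigcup_{j=1}^{a-1}\mathcal{A}(T_j)$. The plan is to start from an arbitrary $\mathcal{T}=\{T_1,\dots,T_{a-1}\}$ and repeatedly apply a local ``de-collision'' move that never decreases the number of edges while strictly decreasing the total overlap $\Phi(\mathcal{T}):=\sum_{1\le j<j'\le a-1}|T_j\cap T_{j'}|$. Since $\Phi$ is a non-negative integer, after finitely many moves one reaches a family with $\Phi=0$, that is, a family of $a-1$ pairwise disjoint $s$-sets; this is by definition a $(n,r,k,a,i)$-Frankl family, and since any two such families differ by a permutation of $V$ they have the same number of edges. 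This yields the claimed inequality.

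Here is the move. Suppose $\mathcal{T}$ is not pairwise disjoint, say $x\in T_1\cap T_2$. Because the $T_j$ are not pairwise disjoint, $\big|\bigcup_j T_j\big|\le (a-1)s-1$, while the constraint $i\le\frac{\lfloor n/(a-1)\rfloor-k}{2}$ gives $(a-1)s=(a-1)(k+2i)\le n$; hence there is a vertex $y\in V\setminus\bigcup_j T_j$. Replace $T_2$ by $T_2':=(T_2\setminus\{x\})\cup\{y\}$, keep the other sets unchanged, and call the result $\mathcal{T}^{*}$. Since $y$ lies in no $T_j$, every pairwise overlap involving $T_2'$ is contained in the corresponding overlap involving $T_2$, and moreover $x\notin T_1\cap T_2'$; hence $\Phi(\mathcal{T}^{*})\le\Phi(\mathcal{T})-1$. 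It remains to prove $|\mathcal{H}(\mathcal{T}^{*})|\ge |\mathcal{H}(\mathcal{T})|$.

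For this, let $\sigma$ be the involution of $\binom{V}{r}$ exchanging $x$ and $y$ (and fixing every $r$-set that contains both or neither of them), and partition $\binom{V}{r}$ into $\sigma$-orbits. If an $r$-set $E$ contains both $x$ and $y$, or neither, then $|E\cap T|$ is unchanged for every $T\in\{T_1,T_3,\dots,T_{a-1}\}$ and a direct check gives $|E\cap T_2|=|E\cap T_2'|$; so such $E$ lies in $\mathcal{H}(\mathcal{T})$ iff it lies in $\mathcal{H}(\mathcal{T}^{*})$. Otherwise $E$ and $E':=\sigma(E)$ form a $2$-element orbit; say $x\in E$, $y\notin E$. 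Put $p:=|E\cap(T_2\setminus\{x\})|$, let $\alpha:=\mathbf{1}[\exists\,j\in\{1,3,\dots,a-1\}:|E\cap T_j|\ge\tau]$, and let $\beta$ be the analogous indicator for $E'$. Computing the four intersection sizes shows that, on the orbit $\{E,E'\}$, the number of edges contributed to $\mathcal{H}(\mathcal{T})$ is $\max(\alpha,\mathbf{1}[p+1\ge\tau])+\max(\beta,\mathbf{1}[p\ge\tau])$, while the number contributed to $\mathcal{H}(\mathcal{T}^{*})$ is $\max(\alpha,\mathbf{1}[p\ge\tau])+\max(\beta,\mathbf{1}[p+1\ge\tau])$. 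Since the swap deletes $x$ (which lies in $T_1$) and inserts $y$ (which lies in none of the sets), we get $|E'\cap T_j|\le |E\cap T_j|$ for all $j\in\{1,3,\dots,a-1\}$, hence $\beta\le\alpha$; together with $\mathbf{1}[p+1\ge\tau]\ge\mathbf{1}[p\ge\tau]$ this reduces the orbit inequality to the elementary fact that $\max(\alpha,u)+\max(\beta,v)\ge\max(\alpha,v)+\max(\beta,u)$ whenever $u,v,\alpha,\beta\in\{0,1\}$ with $v\ge u$ and $\beta\le\alpha$ (equivalently, $t\mapsto\max(\alpha,t)-\max(\beta,t)$ is non-increasing on $\{0,1\}$). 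Summing over all orbits gives $|\mathcal{H}(\mathcal{T}^{*})|\ge|\mathcal{H}(\mathcal{T})|$, completing the move and hence the proof.

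The only step requiring genuine care is the bookkeeping inside the $2$-element orbits: one must track how $|E\cap T_2|$, $|E'\cap T_2|$, $|E\cap T_2'|$, $|E'\cap T_2'|$ change, and — crucially — verify that the swap can only weaken the contribution of the unchanged sets, i.e. that $\beta\le\alpha$. It is this monotonicity, rather than any symmetry, that makes the final Boolean inequality go through; everything else is a routine termination argument on $\Phi$, and the hypothesis on the range of $i$ enters only to guarantee that a fresh vertex $y$ exists.
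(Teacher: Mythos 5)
Your argument is correct, and at heart it is the same kind of proof as the paper's: an exchange step that trades overlapping vertices for fresh ones, an injection between $r$-sets showing the edge count does not decrease, and iteration until the $T_j$ are pairwise disjoint. The differences lie in the granularity and the bookkeeping, and they are worth recording. The paper removes the whole intersection $S=T_1\cap T_2$ from $T_1$ at once, replacing it with a fresh set $R$ with $|R|=|S|$, and casts the counting probabilistically (a uniform random $r$-set $\mathbb{I}$, the events $A_1,A_2$ of Lemma \ref{lem:1}, and the swap of $\mathbb{I}\cap S$ with $\mathbb{I}\cap R$ via a bijection $\phi$); you swap a single vertex $x\in T_1\cap T_2$ for a single fresh vertex $y$, argue via the deterministic involution of $\binom{V}{r}$ exchanging $x$ and $y$ with orbit-by-orbit accounting, and obtain termination from the overlap potential $\Phi$. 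Your finer move buys genuine robustness at the key point: since only $T_2$ is altered and $y$ lies in no $T_j$, the swapped $r$-set's intersections with every untouched set can only shrink, so the monotonicity $\beta\le\alpha$ is immediate and the elementary Boolean inequality settles each orbit. In the block swap of Lemma \ref{lem:1} this is precisely the delicate spot: $J_{\mathbb{I}}$ contains $\phi^{-1}(\mathbb{I}_R)\subset S$, so if some $T_j$ with $j\ge 3$ meets $S$, the quantity $|J_{\mathbb{I}}\cap T_j|$ can exceed $|\mathbb{I}\cap T_j|$ and reach $k+i$, in which case $J_{\mathbb{I}}$ captures $\mathcal{T}$ and falls outside $A_2$ --- a configuration the paper's proof does not explicitly address; your one-vertex-at-a-time version sidesteps it entirely, and your explicit potential function also makes the (easy) termination step airtight rather than implicit.
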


In other words, Theorem \ref{thm:2} suggests that  
candidates for the hypergraph that maximizes the number of edges in 
Problem \ref{prbl:1} can be found among $(n,r,k,a,i)$-Frankl families.
 
We now proceed to find a hypergraph whose $k$-matching number equals $a-1$ and corresponds to the hypergraph $\mathcal{H}_1$ in Erd\H{o}s' conjecture.   
Notice that the hypergraph $\mathcal{H}_1$ is a complete $r$-uniform hypergraph on $ra-1 = r(a-1) + (r-1)$ vertices that has the following property: one can find $a-1$ edges $E_1,\ldots,E_{a-1}\in \mathcal{H}_1$ such that for every $T\in \binom{[ra-1]}{r}$  there exists $i\in \{1,\ldots,a-1\}$ 
such that $|T\cap E_i|\ge 1$.
Similarly, we are looking for an $r$-uniform hypergraph having the property that 
there  exist $a-1$ edges $E_1,\ldots,E_{a-1}$ such that for every $T\in \binom{[ra-1]}{r}$  there exists $i\in [a-1]$ 
such that $|T\cap E_i|\ge k$. Suppose that $r\ge (a-1)(k-1)+1$ and 
let $n_0 = r(a-1)+r -(a-1)(k-1)-1$. Consider the hypergraph, $\mathcal{H}_0$,  
on the vertex set $[n_0]$ whose edge set is $\binom{[n_0]}{r}$. It is not difficult to see that $\mathcal{H}_0$ has the required property. Notice that, when $k=1$, the hypergraph $\mathcal{H}_0$ is the same as the hypergraph $\mathcal{H}_1$. The discussion thus far leads us in the formulation of the following.   \\

\begin{conjecture}  
\label{conj:2}
Let $\mathcal{H}=(V,\mathcal{E})$ be an $r$-uniform hypergraph on $n$ vertices.
Fix a positive integer $k<r$ and assume further that $\nu_k(\mathcal{H})<a$, for some $a\ge 2$, as well as that $n\ge r\cdot a$. 
Set $n_0 = ra -(a-1)(k-1)-1$. 
Then the number of edges in $\mathcal{H}$ is at most 
\[ \max\left\{ \binom{n_0}{r} , |\mathcal{F}_i|; 0\le i \le \frac{\lfloor \frac{n}{a-1}\rfloor -k}{2}  \right\} , \]
where $\mathcal{F}_i$ is a $(n,r,k,a,i)$-Frankl family. 
\end{conjecture}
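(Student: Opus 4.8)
The plan is to sandwich the extremal hypergraph between two regimes—a ``dense-kernel'' regime controlled by the complete intersection theorem (Theorem~\ref{EKR}) and a ``spread'' regime controlled by an Erd\H{o}s-matching-type count—and to run an induction on $a$. The engine of every estimate is the following reformulation, which I would isolate first. Let $\mathcal{M}=\{M_1,\dots,M_{a-1}\}$ be a \emph{maximum} $k$-matching, so $\nu_k(\mathcal{H})\le a-1$. Then every edge $E\in\mathcal{E}$ must satisfy $|E\cap M_j|\ge k$ for some $j\in[a-1]$, since otherwise $\mathcal{M}\cup\{E\}$ would be a $k$-matching of size $a$. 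Equivalently, $\mathcal{H}$ contains no $a$ edges that are pairwise $(k-1)$-intersecting; in particular it contains no sunflower with $a$ petals and core of size at most $k-1$. This is the exact generalization of ``$\nu<a$ means no $a$ disjoint edges.'' Before exploiting it I would apply the usual left-compression operators $S_{ij}$, arguing that they preserve $|\mathcal{H}|$ and do not increase $\nu_k$, so that one may assume $\mathcal{H}$ is shifted. The monotonicity $\nu_k(S_{ij}\mathcal{H})\le\nu_k(\mathcal{H})$ is the first place genuine care is required: one must verify that un-shifting a witnessing $k$-matching preserves the bound $|E\cap E'|\le k-1$ on \emph{every} pair, which for $k\ge 2$ is more delicate than the classical $k=1$ situation.

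For the base case $a=2$ the hypothesis $\nu_k(\mathcal{H})<2$ says precisely that $\mathcal{H}$ is $k$-intersecting, so Theorem~\ref{EKR} yields the bound as the maximum of the $|\mathcal{B}_i|=|\mathcal{F}_i|$; here $n_0=2r-k$ and one checks directly that $\binom{n_0}{r}=\binom{2r-k}{r}$ is itself realized by a Frankl family, so the two terms of the $\max$ coincide in this case. For the inductive step I would fix the heaviest core and split into two cases. If some $(k+2i)$-set lies in a fixed positive fraction of the edges, then the link of that set is $k$-intersecting, Theorem~\ref{EKR} pins its contribution to an $\mathcal{F}_i$-shape, and the edges avoiding the core have $\nu_k<a-1$ and are controlled by the inductive hypothesis. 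If no core is heavy, the covering statement above forces the relevant edges to live on essentially $n_0$ vertices, giving at most $\binom{n_0}{r}$. Comparing the outputs of the two branches is exactly the $\max$ appearing in the statement.

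The decisive difficulty is the regime with $n$ close to $ra$, where $\binom{n_0}{r}$ is the governing term. Here the heavy-core/spread dichotomy of the inductive step has no quantitative slack: the refinement that replaces a size-$r$ set $M_j$ by a genuine $(k+2i)$-core relies on $n$ being large relative to $a$ and fails outright near $n=ra$. I would stress that this is not a technicality but a structural barrier, since specializing to $k=1$ turns Conjecture~\ref{conj:2} into Erd\H{o}s' matching conjecture (Conjecture~\ref{conj:1}) with $n_0=ra-1$, which is open precisely for $n$ below the threshold $(2r+1)a$ of Frankl's Theorem~\ref{thm:1}. Consequently no argument of this shape can cover the full range $n\ge ra$ without simultaneously resolving Erd\H{o}s' conjecture. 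The honest target of the plan is therefore a threshold result—valid once $n$ exceeds an explicit multiple of $\binom{r}{k}^2 a$, so that the $i=0$ Frankl family dominates and the dense-kernel branch carries genuine slack—while the remaining gap down to $n\ge ra$ inherits exactly the open difficulty of the classical case $k=1$.
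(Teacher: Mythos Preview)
The statement is a \emph{conjecture}; the paper does not prove it in full. What the paper proves is Theorem~\ref{thm:3}, a threshold version valid for $n\ge 4r\binom{r}{k}^2 a$, and your final paragraph correctly identifies why nothing better is possible without resolving Conjecture~\ref{conj:1}. So the right comparison is between your outlined threshold argument and the paper's proof of Theorem~\ref{thm:3}.

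Both arguments run induction on $a$ with Theorem~\ref{EKR} as the base case, and both split on whether some fixed small set is ``heavy''. But the two schemes diverge in the heavy branch, and yours has a genuine gap. You assert that if a $(k+2i)$-set $C$ is heavy then (i) the link of $C$ is $k$-intersecting and (ii) the edges avoiding $C$ satisfy $\nu_k<a-1$. Claim (i) is either trivial or false: any two edges through $C$ already meet in at least $k+2i\ge k$ points, so the \emph{star} is vacuously $k$-intersecting and EKR gives nothing beyond the obvious $\binom{n-|C|}{r-|C|}$; the \emph{link} $\{E\setminus C:C\subset E\}$ need not be $k$-intersecting at all. Claim (ii) simply does not follow from $\nu_k(\mathcal{H})<a$: removing the star of $C$ can leave a subhypergraph with $\nu_k$ still equal to $a-1$, so you cannot invoke the inductive bound for $a-1$ on it.

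The paper sidesteps both issues by proving the contrapositive: assume $|\mathcal{E}|\ge 1+g(n,r,k,a)$ and build a $k$-matching of size $a$. The dichotomy is on the heaviest \emph{$k$-set} $T_1$ (not a $(k+2i)$-set). In the light case a direct double-count shows a maximal $k$-matching already has size $\ge a$. In the heavy case one deletes the star of $T_1$; the remainder still has at least $1+g(n,r,k,a-1)$ edges, so induction yields a $k$-matching $E_1,\dots,E_{a-1}$ there, and then the heaviness of $T_1$ guarantees a star edge meeting each $E_j$ in fewer than $k$ points, extending the matching to size $a$. No shifting is used; the paper mentions it only in Section~\ref{sect:4} as a possible route to improving the constant. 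Your plan would be repaired by adopting this contrapositive formulation of the inductive step.
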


Notice that when $k=1$ 
the previous conjecture reduces to Conjecture \ref{conj:1}. Notice also that when $\nu_k(\mathcal{H})=1$   
Conjecture \ref{conj:2} reduces to the $k$-intersection problem. 
Hence Conjecture \ref{conj:2} can be seen is a generalization of both Erd\H{o}s' matching conjecture and the $k$-intersection problem. 
In this note we verify the validity of this conjecture for large values of $n$. The proof is by induction on $\nu_k(\mathcal{H})$, where the $k$-intersection problem is the base case. In particular, we obtain the following result. \\

\begin{theorem}
\label{thm:3}
Let $\mathcal{H}=(V,\mathcal{E})$ be a $r$-uniform hypergraph on $n$ vertices. Assume further that 
$\nu_k(\mathcal{H})<a$, where $1\le k < r$, and that $n\ge 4r\binom{r}{k}^2\cdot a$. Let $\mathcal{F}_0$ be a $(n,r,k,a,0)$-Frankl family. 
Then
\[ |\mathcal{E}| \leq |\mathcal{F}_0| . \]
\end{theorem}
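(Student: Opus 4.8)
The plan is to argue by induction on $a$, using the Ahlswede--Khachatrian theorem (Theorem~\ref{EKR}) as the base case $a=2$ (i.e. $\nu_k(\mathcal{H})=1$), where a $k$-intersecting hypergraph on $n\ge 4r\binom{r}{k}^2\cdot a$ vertices has at most $\binom{n-k}{r-k}=|\mathcal{F}_0|$ edges, since our lower bound on $n$ comfortably exceeds the threshold $(k+1)(r-k-1)$. For the inductive step, suppose $\nu_k(\mathcal{H})<a$ with $a\ge 3$ and the statement holds for smaller values. If $\mathcal{H}$ has a $k$-matching of size exactly $a-1$, say $E_1,\dots,E_{a-1}$, consider the union $W=E_1\cup\cdots\cup E_{a-1}$, a set of at most $r(a-1)$ vertices. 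Every edge $E\in\mathcal{E}$ must satisfy $|E\cap E_i|\ge k$ for some $i$ --- otherwise $\{E,E_1,\dots,E_{a-1}\}$ would be a $k$-matching of size $a$. So $\mathcal{E}$ is contained in the "covered" family of all $r$-sets meeting some $E_i$ in at least $k$ points; by Theorem~\ref{thm:2}-type reasoning this is at most the size of the corresponding $(n,r,k,a,0)$-Frankl family up to lower-order terms, but we need to be careful because the $E_i$ are $r$-sets, not $k$-sets. The key point is that the number of $r$-sets $E$ with $|E\cap E_i|\ge k$ for a fixed $r$-set $E_i$ is $\sum_{j\ge k}\binom{r}{j}\binom{n-r}{r-j}\le \binom{r}{k}\binom{n-r}{r-k}(1+o(1))$, and summing over the $a-1$ choices of $i$ gives at most $(a-1)\binom{r}{k}\binom{n-r}{r-k}(1+o(1))$, which must be compared against $|\mathcal{F}_0| = \binom{n}{r}-\binom{n-k(a-1)}{r}\approx k(a-1)\binom{n-k}{r-k}$. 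Since $\binom{n-r}{r-k}/\binom{n-k}{r-k}\to 1$ and we have the factor $\binom{r}{k}$ on the wrong side, this crude bound does not immediately suffice --- this is the main obstacle.

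To overcome it, I would instead pick a single edge $E_0\in\mathcal{E}$ and split $\mathcal{E}$ according to a vertex. More precisely, the standard approach for Erd\H{o}s-type problems is: fix any $k$-set $S$ contained in some edge; let $\mathcal{E}_S=\{E\in\mathcal{E}: S\subset E\}$ be the "star" and $\mathcal{E}\setminus\mathcal{E}_S$ the rest. Removing all edges containing $S$ and deleting the vertices of $S$ yields an $(r-k)$-uniform hypergraph, but that changes uniformity, so this is not quite the right move either. The cleaner route, which I expect to work, is the following dichotomy on the minimum $k$-degree. If every $k$-set $S\in\binom{V}{k}$ lies in at most $t$ edges for a suitable threshold $t$ (roughly $t\approx \binom{r}{k}a$), then $|\mathcal{E}|\le \frac{1}{\binom{r}{k}}\binom{n}{k}t$ is too small unless... no: instead, if some $k$-set $S$ has high $k$-degree, we want to show edges through $S$ already force a large structure; if all $k$-degrees are low, we bound $|\mathcal{E}|$ directly and show it is below $|\mathcal{F}_0|$. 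Carrying this out requires choosing the threshold so that the "low-degree" count $\frac{1}{\binom{r}{k}}\binom{n}{k}\cdot t < |\mathcal{F}_0|\approx k(a-1)\binom{n-k}{r-k}$; since $\binom{n}{k}/\binom{n-k}{r-k}\sim \binom{n}{k}\cdot\frac{(r-k)!}{n^{r-k}}$, which is small, we can afford $t$ as large as $\approx c\cdot n^{r-2k}$-ish, far exceeding $\binom{r}{k}a$, so the low-degree case is fine.

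In the high-degree case --- some $S_0\in\binom{V}{k}$ lies in more than $t$ edges --- I would delete $S_0$ from the picture: let $\mathcal{H}'$ be the hypergraph on $V\setminus S_0$ whose edges are $\{E\setminus S_0 : E\in\mathcal{E},\ S_0\subset E\}$, which is $(r-k)$-uniform, together with... actually the correct induction is on $\nu_k$, not on $r$: I claim $\nu_k(\mathcal{H}\setminus \mathcal{E}_{S_0})$, the hypergraph of edges \emph{not} containing $S_0$, is still $<a$, and moreover I can peel off the star $\mathcal{E}_{S_0}$ whose size is at most $\binom{n-k}{r-k}$. So $|\mathcal{E}|\le \binom{n-k}{r-k} + |\mathcal{E}(\mathcal{H}')|$ where $\mathcal{H}'=\mathcal{H}\setminus\mathcal{E}_{S_0}$. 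If $\nu_k(\mathcal{H}')\le a-2$, induction gives $|\mathcal{E}(\mathcal{H}')|\le |\mathcal{F}_0^{(a-1)}|\approx k(a-2)\binom{n-k}{r-k}$, and adding $\binom{n-k}{r-k}$ stays below $k(a-1)\binom{n-k}{r-k}\approx|\mathcal{F}_0|$ --- good. If instead $\nu_k(\mathcal{H}')=a-1$, then there are $a-1$ edges avoiding $S_0$ forming a $k$-matching, and combined with the fact that every edge containing $S_0$ must $k$-intersect one of these $a-1$ edges (else we get a $k$-matching of size $a$ in $\mathcal{H}$), we again bound $|\mathcal{E}_{S_0}|$ by a union of $(a-1)$ small "covered" families and $|\mathcal{E}(\mathcal{H}')|$ by induction. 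Balancing these bounds against $|\mathcal{F}_0|$ is where the explicit constant $4r\binom{r}{k}^2$ in the hypothesis $n\ge 4r\binom{r}{k}^2 a$ gets used: it guarantees the ratios $\binom{n-r}{r-k}/\binom{n-k}{r-k}$ and $\binom{n}{k}\binom{r}{k}^{-1}/\binom{n-k}{r-k}$ are close enough to $1$ (resp. small enough) that all the error terms collectively lose less than one full copy of $\binom{n-k}{r-k}$. The main obstacle, as flagged, is making the covered-family estimate in the $\nu_k(\mathcal{H}')=a-1$ sub-case tight enough: a naive union bound over $(a-1)$ sets each of size $\binom{r}{k}\binom{n-r}{r-k}$ overshoots by the factor $\binom{r}{k}$, so one must exploit that these $a-1$ edges, together with $S_0$, interact --- e.g. by a more careful inclusion--exclusion or by observing that an $r$-set which $k$-intersects $E_i$ and contains $S_0$ is heavily constrained when $S_0\not\subset E_i$ --- to shave the estimate down to $(a-1)\binom{n-k}{r-k}(1+o(1))$, which is exactly what the target allows.
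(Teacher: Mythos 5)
Your proposal has the right skeleton---induction on $a$ with the Ahlswede--Khachatrian theorem as the base case, a dichotomy on the maximum $k$-degree, and removal of the star of a high-degree $k$-set---but the decisive step is missing, and you flag it yourself without resolving it. In the subcase $\nu_k(\mathcal{H}')=a-1$ your appeal to induction is circular: $\mathcal{H}'$ still only satisfies $\nu_k(\mathcal{H}')<a$, not $<a-1$, so the induction hypothesis does not apply to it, and the promised ``shaving'' of the factor $\binom{r}{k}$ is never carried out. The way out (and the paper's route) is not to sum edge-counts against $|\mathcal{F}_0|$ at all, but to argue contrapositively: assume $|\mathcal{E}|\ge 1+g(n,r,k,a)$, where $g(n,r,k,a)=|\mathcal{F}_0|$, and pick a $k$-set $T_1$ of maximum degree $d(T_1)$. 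If $d(T_1)<\frac{1+g(n,r,k,a)}{(a-1)\binom{r}{k}}$, then a maximal $k$-matching of size at most $a-1$ would cover every edge by one of at most $(a-1)\binom{r}{k}$ $k$-sets, forcing $|\mathcal{E}|<1+g(n,r,k,a)$, a contradiction; note this covering count over the matching is what replaces your global bound $|\mathcal{E}|\le\binom{n}{k}t/\binom{r}{k}$, which actually fails for $k\ge r/2$ since the admissible threshold $t\approx n^{r-2k}$ is then bounded (or vanishing), not ``far exceeding $\binom{r}{k}a$''. If instead $d(T_1)$ is at least that large, apply the induction hypothesis to the edges \emph{not} containing $T_1$ to get a $k$-matching $E_1,\dots,E_{a-1}$, and then use $d(T_1)>(a-1)\binom{r}{k}\binom{n-k-1}{r-k-1}$ (this is exactly where $n\ge 4r\binom{r}{k}^2a$ enters) to find an edge through $T_1$ avoiding all $(a-1)\binom{r}{k}$ $k$-subsets of the $E_i$, yielding a $k$-matching of size $a$. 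The high-degree star is used to \emph{extend} a matching, not to be counted.

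Two quantitative slips would sink the counting version even if the structure were repaired. First, $|\mathcal{F}_0|$ is misidentified: for $i=0$ the condition $|E\cap T_j|\ge k$ with $|T_j|=k$ means $T_j\subseteq E$, so $|\mathcal{F}_0|=g(n,r,k,a)\approx(a-1)\binom{n-k}{r-k}$, whereas $\binom{n}{r}-\binom{n-k(a-1)}{r}$ counts $r$-sets merely meeting $\bigcup_j T_j$ and is larger by roughly a factor $n^{k-1}$ when $k\ge 2$; several of your ``stays below'' claims lean on this inflated target. Second, even your clean subcase $\nu_k(\mathcal{H}')\le a-2$ does not close as an exact inequality: $g(n,r,k,a-1)+\binom{n-k}{r-k}$ exceeds $g(n,r,k,a)$ by a positive lower-order term (the inclusion--exclusion cross terms), so peeling off a full star and adding the inductive bound overshoots the extremal value; an exact bound cannot be reached by this additive decomposition, which again points to the contrapositive extension argument as the necessary ingredient.
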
 
 
The remaining part of this note is organized as follows.  
In Section~\ref{sect:2} we prove Theorem \ref{thm:2}. The proof is probabilistic and is based on a coupling argument.
In Section~\ref{sect:3} we prove Theorem \ref{thm:3} by adapting Erd\H{o}s' proof of Theorem \ref{thm:1} to our setting. 
Section~\ref{sect:4} includes some concluding remarks.

\section{Proof of Theorem \ref{thm:2}}\label{sect:2}

In this section we prove Theorem \ref{thm:2}. The proof is divided into several lemmata and requires some extra piece of notation and definitions. 

Clearly, we may assume that $k\ge 2$ and $a\ge 3$; otherwise there is nothing to prove. 
Let $\mathcal{H}=(V,\mathcal{E})$ be an $r$-uniform hypergraph. 
Let $\mathbb{I}$ be a subset of $V$ whose cardinality equals $r$ which is chosen uniformly at random from the family $\binom{V}{r}$.  Given a set $\mathcal{T}$ consisting of $a-1$ elements $T_1,\ldots,T_{a-1}\in \binom{V}{k+2i}$, 
we say that $\mathbb{I}$ \emph{captures} $\mathcal{T}$  if there exists $j\in [a-1]$ such that $|\mathbb{I}\cap T_j| \ge k+i$.  
 
Now let $\mathcal{T}=\{T_1,\ldots,T_{a-1}\}\subset \binom{V}{k+2i}$ and suppose that the sets $T_j,j=1,\ldots,a-1$ are not disjoint. This means that we can find two $(k+2i)$-sets, say $T_1$ and $T_2$, such that 
$T_1\cap T_2\neq \emptyset$. Let $S = T_1 \cap T_2$ and set $s = |S|$. 
Now choose $s$ vertices $v_1,\ldots,v_s \in V\setminus \cup_{i=1}^{a-1} T_i$ (recall that $n\ge ra$) 
and set $R=\{v_1,\ldots,v_s\}$. 
Now define the family $\mathcal{T}^{\ast} = \{T_1^{\ast},T_2,\ldots,T_{a-1}\}$, where 
$T_1^{\ast} = (T_1\setminus S) \cup R$ and note that $T_1^{\ast}\in \binom{V}{k}$. 
Finally, fix a bijection $\phi:S\to R$, from $S$ onto $R$. 
 
We claim that the number of edges in $\mathcal{H}(\mathcal{T})$ is less than or equal to the number of edges in $\mathcal{H}(\mathcal{T}^{\ast})$. To prove this claim, it is enough to show that 
the probability that $\mathbb{I}$ captures $\mathcal{T}$ is less than or equal to the probability that 
$\mathbb{I}$ captures $\mathcal{T}^{\ast}$. 

Now let $A_1$ be the event  "$\mathbb{I}$ captures $\mathcal{T}$ and does not capture $\mathcal{T}^{\ast}$" and let $A_2$ be the event  
"$\mathbb{I}$ captures $\mathcal{T}^{\ast}$ and does not capture $\mathcal{T}$". \\

\begin{lemma}\label{lem:1} We have $\mathbb{P}[A_1] \le \mathbb{P}[A_2]$. 
\end{lemma}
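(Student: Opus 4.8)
The plan is to construct an explicit injection from the event $A_1$ into the event $A_2$ by exhibiting, for each $r$-set $I$ witnessing $A_1$, a companion $r$-set $I'$ witnessing $A_2$, and to do this via the bijection $\phi:S\to R$ already fixed. Concretely, given $I\in\binom{V}{r}$ with $I\in A_1$, define $I'$ by swapping the roles of $S$ and $R$ along $\phi$: replace each $v\in I\cap S$ by $\phi(v)\in R$ and each $w\in I\cap R$ by $\phi^{-1}(w)\in S$, leaving all other vertices of $I$ untouched. Since $\phi$ is a bijection between the disjoint sets $S$ and $R$, this map $I\mapsto I'$ is an involution on $\binom{V}{r}$, hence in particular injective; moreover $|I'|=|I|=r$, so it is a well-defined bijection of $\binom{V}{r}$ onto itself, and since $\mathbb{I}$ is uniform, $\mathbb{P}[\mathbb{I}=I]=\mathbb{P}[\mathbb{I}=I']$. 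Thus it suffices to show that $I\in A_1 \implies I'\in A_2$.

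First I would record the key arithmetic facts coming from the construction: $R$ is disjoint from every $T_j$ (it was chosen in $V\setminus\bigcup_j T_j$), and $T_1^\ast=(T_1\setminus S)\cup R$ while $T_1=(T_1\setminus S)\cup S$. Consequently, for $j\in\{2,\dots,a-1\}$ the sets $T_j$ meet neither $R$ nor $S\cap(I\triangle I')$ in a way that matters — more precisely, since $T_j\cap R=\emptyset$, the swap can only \emph{remove} elements of $S\cap T_j$ from $I$, so $|I'\cap T_j|\le |I\cap T_j|$ for $j\ge 2$. Wait — that direction is the wrong way for what we want; here is the correct bookkeeping. Because $I\in A_1$, $I$ does not capture $\mathcal{T}^\ast$, so in particular $|I\cap T_1^\ast|\le k+i-1$ and $|I\cap T_j|\le k+i-1$ for all $j\ge 2$; and $I$ captures $\mathcal{T}$, so $|I\cap T_j|\ge k+i$ for \emph{some} $j$. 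That $j$ cannot be in $\{2,\dots,a-1\}$ (those fail), so it must be $j=1$: $|I\cap T_1|\ge k+i$.

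Now I would analyze how the swap changes the three relevant intersection sizes. Write $T_1=(T_1\setminus S)\sqcup S$ and $T_1^\ast=(T_1\setminus S)\sqcup R$; the swap fixes $T_1\setminus S$ pointwise and exchanges (along $\phi$) the part of $I$ lying in $S$ with the part of $I$ lying in $R$. A direct count gives $|I'\cap T_1^\ast| = |I\cap(T_1\setminus S)| + |I\cap S| = |I\cap T_1| \ge k+i$, so $I'$ captures $T_1^\ast$, hence captures $\mathcal{T}^\ast$. It remains to check $I'$ does not capture $\mathcal{T}$, i.e. $|I'\cap T_j|\le k+i-1$ for every $j$. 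For $j=1$: $|I'\cap T_1| = |I\cap(T_1\setminus S)| + |I'\cap S| = |I\cap(T_1\setminus S)| + |I\cap R| \le |I\cap T_1^\ast| \le k+i-1$, using $R\subseteq T_1^\ast$ disjointly from $T_1\setminus S$. For $j\ge 2$: since $T_j\cap R=\emptyset$, replacing elements of $I\cap R$ (which contribute nothing to $T_j$) and of $I\cap S$ by their $\phi$-images in $R$ (again contributing nothing to $T_j$) can only decrease the count, so $|I'\cap T_j|\le |I\cap T_j|\le k+i-1$. Hence $I'\in A_2$, completing the injection and the proof.

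The only genuinely delicate point is the bookkeeping in the last paragraph — one must be careful that the sets $T_1\setminus S$, $S$, $R$ are pairwise disjoint (true by construction, since $S=T_1\cap T_2\subseteq T_1$ is disjoint from $T_1\setminus S$, and $R$ is chosen outside all $T_j$), and that no vertex of $T_j$ for $j\ge 2$ other than those in $S$ is disturbed by the swap. I would also double-check the boundary possibility $|I\cap S|=|I\cap R|$ is not needed for injectivity (it is not: the map is an involution regardless). Everything else is a finite casework that I would lay out as a short displayed computation rather than prose.
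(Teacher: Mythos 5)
Your construction is exactly the paper's: swap $\mathbb{I}\cap S$ with $\mathbb{I}\cap R$ along $\phi$ and argue that this maps $A_1$ injectively into $A_2$. Injectivity and the two computations $|I'\cap T_1^{\ast}|=|I\cap T_1|$ and $|I'\cap T_1|=|I\cap T_1^{\ast}|$ are correct. The gap is in your case $j\ge 2$: you argue the swap ``can only decrease the count'' because $T_j\cap R=\emptyset$, but you only account for the elements that \emph{leave} $I$; the swap also \emph{inserts} $\phi^{-1}(I\cap R)$, which is a subset of $S$, and $S$ may very well meet $T_j$ (indeed $S\subseteq T_2$, and $S$ can meet $T_3,\ldots,T_{a-1}$ too, since the sets $T_j$ are arbitrary). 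For $j=2$ the step can be repaired: $S\subseteq T_2$ gives $|I'\cap T_2|-|I\cap T_2|=|I\cap R|-|I\cap S|$, and the conditions $|I\cap T_1|\ge k+i>|I\cap T_1^{\ast}|$ force $|I\cap S|>|I\cap R|$, so the count drops. But for $j\ge 3$ with $\emptyset\neq T_j\cap S\subsetneq S$ the implication $I\in A_1\Rightarrow I'\in A_2$ is simply false. Concretely, take $k=2$, $i=2$ (threshold $k+i=4$, $|T_j|=6$), $r=8$, $a-1=3$, $n=40$, $T_1=\{1,2,3,4,5,6\}$, $T_2=\{1,2,3,8,9,10\}$ (so $S=\{1,2,3\}$), $T_3=\{1,12,13,14,15,16\}$, $R=\{21,22,23\}$, $\phi(1)=21$, $\phi(2)=22$, $\phi(3)=23$, and $I=\{2,3,4,5,12,13,14,21\}$. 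Then $|I\cap T_1|=4$, $|I\cap T_1^{\ast}|=3$, $|I\cap T_2|=2$, $|I\cap T_3|=3$, so $I\in A_1$; but $I'=\{1,4,5,12,13,14,22,23\}$ has $|I'\cap T_3|=4$, so $I'$ captures $\mathcal{T}$ and hence $I'\notin A_2$.

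To be fair, the paper's own proof of Lemma \ref{lem:1} asserts without verification exactly the implication you tried to check (``$J_{\mathbb{I}}$ is an outcome for which $A_2$ occurs''), so your more detailed bookkeeping has in effect exposed a genuine defect of this swap map rather than a slip peculiar to your write-up: whenever some $T_j$ with $j\ge 3$ meets $S$ in a nonempty proper subset, the plain $\phi$-swap need not send $A_1$ into $A_2$. The inequality $\mathbb{P}[A_1]\le\mathbb{P}[A_2]$ may still be true, but establishing it requires a new idea, not finer counting with the same map --- for instance an injection that, besides moving weight from $S$ to $R$, also re-selects \emph{which} elements of $S$ are retained so that the constraints $|I'\cap T_j|<k+i$ for $j\ge 3$ are preserved, or a preliminary reduction to the situation where $S$ meets no $T_j$ with $j\ge 3$. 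As written, your argument (like the paper's) does not prove the lemma.
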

\begin{proof}
Notice that the event $A_1$ happens if and only if  $|\mathbb{I}\cap T_1|\ge k+i$, $|\mathbb{I}\cap T_{1}^{\ast}|<k+i$ and $|\mathbb{I}\cap T_j|< k+i$, for all $j\in \{2,\ldots,a-1\}$. 
Similarly, the event $A_2$ happens if and only if $|\mathbb{I}\cap T_{1}^{\ast}|\ge k+i$ and $|\mathbb{I}\cap T_j|< k+i$, for all $j\in \{1,2,\ldots,a-1\}$.
Now let $\mathbb{I}$ be an outcome for which the event $A_1$ occurs. 
Set $\mathbb{I}_R = \mathbb{I}\cap R, \mathbb{I}_S = \mathbb{I}\cap S$ and define the set 
\[ J_\mathbb{I} = \left( \mathbb{I} \setminus (\mathbb{I}_S \cup \mathbb{I}_R) \right) \cup (\phi(\mathbb{I}_S)\cup \phi^{-1}(\mathbb{I}_R)). \]
Notice that $\mathbb{J}$ is an outcome for which the event $A_2$ occurs and that $\mathbb{I}_1\neq \mathbb{I}_2$ implies $J_{\mathbb{I}_1}\neq J_{\mathbb{I}_2}$. 
This shows that for every outcome for which $A_1$ occurs we can associate, in an injective way,  an outcome for which $A_2$ occurs. Since the all $r$-sets have the same probability of occurring, the result follows. 
\end{proof}

The previous lemma yields the following.  \\

\begin{lemma}
\label{lem:2} 
We have 
$\mathbb{P}[\mathbb{I}\; \text{captures}\; \mathcal{T}] \leq \mathbb{P}[\mathbb{I}\; \text{captures}\; \mathcal{T}^{\ast}]$. 
\end{lemma}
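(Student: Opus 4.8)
The plan is to write both capture probabilities as a common term plus the probabilities of the events $A_1$ and $A_2$ introduced before Lemma~\ref{lem:1}, and then simply quote that lemma. Let $C$ be the event that $\mathbb{I}$ captures $\mathcal{T}$ and let $C^{\ast}$ be the event that $\mathbb{I}$ captures $\mathcal{T}^{\ast}$. From the characterizations recalled in the proof of Lemma~\ref{lem:1} we have $A_1 = C\cap (C^{\ast})^{c}$ and $A_2 = C^{\ast}\cap C^{c}$, so $A_1$ and $A_2$ together form the symmetric difference $C\,\triangle\, C^{\ast}$, and we get the disjoint decompositions $C = (C\cap C^{\ast})\cup A_1$ and $C^{\ast} = (C\cap C^{\ast})\cup A_2$. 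Taking probabilities yields $\mathbb{P}[C] = \mathbb{P}[C\cap C^{\ast}] + \mathbb{P}[A_1]$ and $\mathbb{P}[C^{\ast}] = \mathbb{P}[C\cap C^{\ast}] + \mathbb{P}[A_2]$, hence $\mathbb{P}[C^{\ast}] - \mathbb{P}[C] = \mathbb{P}[A_2] - \mathbb{P}[A_1]$, which is nonnegative by Lemma~\ref{lem:1}. That is exactly the assertion of Lemma~\ref{lem:2}.

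There is essentially no obstacle at this step: the genuine content—the explicit injection sending each outcome realizing $A_1$ to an outcome realizing $A_2$, built from the bijection $\phi\colon S\to R$—has already been supplied in the proof of Lemma~\ref{lem:1}, and what remains is only the bookkeeping that converts that cardinality (equivalently probability) inequality into the claimed inequality between capture probabilities. The one point worth a line of care is verifying that $A_1$ and $A_2$ really exhaust $C\,\triangle\, C^{\ast}$, i.e. that one has not overlooked outcomes lying in $C\cap C^{\ast}$; this is immediate from the defining membership conditions $|\mathbb{I}\cap T_j|\ge k+i$, since the decompositions above are partitions by construction.

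For context, Lemma~\ref{lem:2} is the inductive step toward Theorem~\ref{thm:2}: passing from $\mathcal{T}$ to $\mathcal{T}^{\ast}$ strictly decreases the total amount of pairwise overlap among the sets $T_1,\dots,T_{a-1}$ while not decreasing $|\mathcal{H}(\mathcal{T})|$, and iterating finitely many times (each iteration uses fresh vertices of $V\setminus\bigcup_i T_i$, which exist because $n\ge ra$) produces a family of pairwise disjoint $(k+2i)$-sets, i.e. an $(n,r,k,a,i)$-Frankl family, with at least as many edges. Thus the plan for Lemma~\ref{lem:2} is the short algebraic argument above, and the remaining lemmata of the section will carry out this iteration and translate it back to the edge counts.
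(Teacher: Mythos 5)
Your proposal is correct and is essentially the same argument as the paper's: the paper likewise writes $\mathbb{P}[\mathbb{I}\ \text{captures}\ \mathcal{T}]=\mathbb{P}[\mathbb{I}\ \text{captures both}]+\mathbb{P}[A_1]$, bounds $\mathbb{P}[A_1]\le\mathbb{P}[A_2]$ by Lemma~\ref{lem:1}, and recombines to get $\mathbb{P}[\mathbb{I}\ \text{captures}\ \mathcal{T}^{\ast}]$. No gaps; the extra remark about the disjoint decomposition is just the bookkeeping the paper leaves implicit.
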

\begin{proof}
Notice that Lemma \ref{lem:1} yields
\begin{eqnarray*}
 \mathbb{P}[\mathbb{I}\; \text{captures}\; \mathcal{T}] &=&  \mathbb{P}[\mathbb{I}\; \text{captures}\; \mathcal{T}\; \text{and} \; \mathcal{T}^{\ast}]  
 + \mathbb{P}[A_1]  \\
 &\leq& \mathbb{P}[\mathbb{I}\; \text{captures}\; \mathcal{T}\; \text{and} \; \mathcal{T}^{\ast}] 
 + \mathbb{P}[A_2]\\
 &=&  \mathbb{P}[\mathbb{I}\; \text{captures}\; \mathcal{T}^{\ast}] .
\end{eqnarray*}
\end{proof}

Hence Theorem \ref{thm:2} follows upon iterating the previous two lemmata until we get a family $\mathcal{T}$ whose elements are pairwise disjoint. 

In the next section we prove that $(n,r,k,a,0)$-Frankl families are the extremal hypergraphs of Problem \ref{prbl:1} for large values of $n$. 
For the sake of completeness, let us also count the maximum number of edges in such a family.  \\

\begin{lemma}
\label{lem:3}
Let the set $V$ and parameters $n,k,a$ be as in Theorem \ref{thm:2} and let $\mathcal{T}=\{T_1,\ldots,T_{a-1}\}$ be a family consisting of $a-1$ pairwise disjoint $k$-sets from $V$. Then the number of $E\in \binom{V}{r}$ that contain at least one element from $\mathcal{T}$ is equal to 
\[  g(n,r,k,a):= \sum_{j=1}^{\min\{a-1,\left\lfloor r/k\right\rfloor\} } (-1)^{j-1} \binom{a-1}{j}\binom{n-jk}{r-jk}.   \]
\end{lemma}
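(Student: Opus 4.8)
The plan is to count these $r$-sets by inclusion--exclusion over the family $\mathcal{T}$, exploiting the hypothesis that the $k$-sets $T_1,\ldots,T_{a-1}$ are pairwise disjoint. First I would introduce, for each $j\in[a-1]$, the family $A_j=\{E\in\binom{V}{r}: T_j\subseteq E\}$. Since the phrase ``$E$ contains at least one element from $\mathcal{T}$'' means precisely that $T_j\subseteq E$ for some $j\in[a-1]$, the quantity to be computed is $\bigl|\bigcup_{j=1}^{a-1}A_j\bigr|$, and the whole task reduces to evaluating this union.

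The next step is to compute the intersections that appear in the inclusion--exclusion expansion. For a nonempty $S\subseteq[a-1]$, an $r$-set $E$ lies in $\bigcap_{j\in S}A_j$ if and only if $\bigcup_{j\in S}T_j\subseteq E$. Here the disjointness hypothesis does the essential work: because the $T_j$ are pairwise disjoint $k$-sets, $\bigl|\bigcup_{j\in S}T_j\bigr|=k\,|S|$, so the number of $r$-sets of $V$ containing this fixed $k|S|$-set equals $\binom{n-k|S|}{r-k|S|}$. Crucially, this count depends on $S$ only through its cardinality $|S|$.

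Inclusion--exclusion then gives
\[ \Bigl|\bigcup_{j=1}^{a-1}A_j\Bigr| = \sum_{\emptyset\neq S\subseteq[a-1]} (-1)^{|S|-1}\binom{n-k|S|}{r-k|S|}, \]
and grouping the terms according to $j=|S|$, of which there are exactly $\binom{a-1}{j}$ subsets of a given cardinality $j$, collapses this into $\sum_{j=1}^{a-1}(-1)^{j-1}\binom{a-1}{j}\binom{n-jk}{r-jk}$. Finally I would observe that $\binom{n-jk}{r-jk}$ vanishes as soon as $jk>r$, i.e.\ once $j>\lfloor r/k\rfloor$ (one cannot embed more than $\lfloor r/k\rfloor$ pairwise disjoint $k$-sets inside a single $r$-set), so the summation index may be truncated at $\min\{a-1,\lfloor r/k\rfloor\}$ without changing the value. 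This yields precisely $g(n,r,k,a)$.

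There is no serious obstacle here; the argument is a textbook inclusion--exclusion. The only point genuinely requiring care is the role of the disjointness assumption, which guarantees the additivity $\bigl|\bigcup_{j\in S}T_j\bigr|=k\,|S|$ and hence that the intersection counts depend only on $|S|$; without it the union sizes would vary from one $S$ to another and the binomial coefficients could not be factored out by cardinality. I would therefore state the disjointness as the crux of the computation and verify it explicitly before collecting terms.
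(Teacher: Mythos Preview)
Your proposal is correct and is essentially identical to the paper's own proof: the paper defines $\mathcal{D}_j=\{E\in\binom{V}{r}:T_j\subset E\}$, applies inclusion--exclusion to $\bigl|\bigcup_j\mathcal{D}_j\bigr|$, uses disjointness to get $\bigl|\bigcap_{j\in J}\mathcal{D}_j\bigr|=\binom{n-|J|k}{r-|J|k}$, and then truncates the sum at $\min\{a-1,\lfloor r/k\rfloor\}$. Your write-up is, if anything, slightly more explicit about why disjointness is the crux.
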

\begin{proof}  
For $j=1,\ldots,a-1$ let $\mathcal{D}_j$ be the family consisting of all $E\in \binom{V}{r}$ such that $T_j\subset E$.  
The inclusion-exclusion principle yields that $g(n,r,k,a)$ is equal to 
\[ \left|\bigcup_{i=1}^{a-1} \mathcal{D}_i\right| = \sum_{\emptyset \neq J\subset [a-1]} (-1)^{|J|-1} \left| \bigcap_{j\in J}\mathcal{D}_j \right| .  \]
Now notice that for $\emptyset \neq J\subset [a-1]$ we have
\[ \left| \bigcap_{j\in J}\mathcal{D}_j \right| = \binom{n-|J|\cdot k}{r-|J|\cdot k} \] 
and the result follows upon observing that $\left|\bigcap_{j\in J}\mathcal{D}_j \right| =0$, when $|J| > \min\{a-1,\left\lfloor r/k\right\rfloor\}$. 
\end{proof}

\section{Proof of Theorem \ref{thm:3}}\label{sect:3} 

We imitate Erd\H{o}s' proof of Theorem \ref{thm:1}. 
Notice that it is enough to show that if $\mathcal{H}=(V,\mathcal{E})$ is an $r$-uniform hypergraph on $n > 4r\binom{r}{k}^2\cdot a$ vertices such that 
$|\mathcal{E}| \ge 1+g(n,r,k,a)$, where $g(n,r,k,a)$ is as in Lemma \ref{lem:3}, then we have $\nu_k(\mathcal{H})\ge a$. 
We prove this statement by induction on $a$. When $a=2$, then the result follows from the second statement of  Theorem \ref{EKR}. 
Assuming it holds true for $a-1>1$, we prove it  for $a$. 

Let $\mathcal{H}=(V,\mathcal{E})$ be a hypergraph on $n$ vertices which satisfies $|\mathcal{E}| \ge 1+g(n,r,k,a)$. 
For every $T\in \binom{V}{k}$ denote by $d(T)$  the number of edges $E\in \mathcal{E}$ such that $T\subset E$ 
and choose a $k$-set, say $T_1$, for which $d(T_1)$ is maximum. 
We distinguish two cases. 

Assume first that $d(T_1) < \frac{1+g(n,r,k,a)}{(a-1)\binom{r}{k}}$. 
Let $E_1,\ldots,E_l$ be a maximal $k$-matching of $\mathcal{H}$. Notice that this implies that for any $E\in\mathcal{E}$, there exists $j\in [l]$ such that $|E\cap E_j|\ge k$. 
We claim that $l\ge a$. To see this, notice that if $l<a$ then the edges 
$E_1,\ldots,E_l$ would contain at most $(a-1)\binom{r}{k}$ $k$-sets and therefore the total number of edges in $\mathcal{H}$ satisfies $|\mathcal{E}| <1+g(n,r,k,a)$.  Hence $\mathcal{H}$ contains an edge $E_{l+1}$ which satisfies $|E_{l+1}\cap E_j|\le k-1$, for all $j\in [l]$ and contradicts the maximality of $E_1,\ldots,E_l$. Therefore the claim follows and so does Theorem \ref{thm:3}. 

Assume now that $d(T_1) \ge \frac{1+g(n,r,k,a)}{(a-1)\binom{r}{k}}$. Let $\mathcal{H}(T_1)$ be the hypergraph whose vertex set is $V$ and whose edge set, $\mathcal{E}(T_1)$, consists of all $E\in \mathcal{E}$ such that $T_1\nsubseteq E$. Clearly, we have $|\mathcal{E}(T_1)| \ge 1+g(n,r,k,a) - \binom{n-k}{r-k}$. Now notice that 
\[ g(n,r,k,a) - \binom{n-k}{r-k} = g(n,r,k,a-1)\]
and therefore $|\mathcal{E}(T_1)| \ge 1+g(n,r,k,a-1)$. 
The induction hypothesis implies that there exist at least $a-1$ edges $E_1,\ldots,E_{a-1}$ in $\mathcal{H}(T_1)$ such that $|E_i \cap E_j|\le k-1$, for all $i\neq j$.  
Now notice that the proof will follow once we show that there exists $E\in \mathcal{E}$ such that $T_1\subset E$ which does not contain any of the $(a-1)\binom{r}{k}$ $k$-sets that are contained in $E_1,\ldots,E_{a-1}$. 
Let $T$ be a $k$-set which is contained in some $E_i,i=1,\ldots,a-1$. 
Notice that the number of $r$-sets which contain $T_1$ and $T$ 
is at most $\binom{n-|T\cup T_1|}{r-|T\cup T_1|}$. Since $|T\cup T_1|\ge k+1$ it follows that the number of $r$-sets that contain $T_1$ and any of the $k$-sets contained in $E_1,\ldots,E_{a-1}$ is at most 
\[ (a-1)\binom{r}{k}\binom{n-k-1}{r-k-1} . \] 
We now claim that  
\[ d(T_1)  > (a-1)\binom{r}{k}\binom{n-k-1}{r-k-1}, \; \text{when}\; n \ge 4r\binom{r}{k}^2\cdot a,   \] 
which in turn implies that there exists $E\in \mathcal{E}$ such that $T_1\subset E$ and for which 
$|E\cap E_i|\le k-1$, for all $i\in [a-1]$. Hence $\nu_k(\mathcal{H}) \ge a$ and Theorem \ref{thm:3} follows. 
To prove the claim, note that the estimate  
\[ g(n,r,k,a) \ge (a-1)\binom{n-(a-1)k}{r-k} , \]
combined with the assumption $d(T_1) \ge \frac{1+g(n,r,k,a)}{(a-1)\binom{r}{k}}$, 
implies that it is enough to show    
\[  \binom{n-(a-1)k}{r-k}  \ge (a-1)\binom{r}{k}^2\binom{n-k-1}{r-k-1}, \; \text{for}\; n \ge 4r\binom{r}{k}^2\cdot a.   \] 
The last inequality can be equivalently written as  
\[  (n-r) \prod_{i=1}^{(a-2)k-1}\left(1-\frac{r-k}{n-k-i}\right) \ge (a-1)(r-k)\binom{r}{k}^2 .  \]
Now observe that, since $r>k$, we can estimate  
\[  (n-r) \prod_{i=1}^{(a-2)k-1}\left(1-\frac{r-k}{n-k-i}\right) \ge (n-r) \left( 1- \frac{r}{n-ar}\right)^{ar} . \]
Note that the right hand side is an increasing function of $n$, for fixed $r,k$, which upon substituting   
$n = 4r\binom{r}{k}^2\cdot a$ becomes   
\[ \left(4r\binom{r}{k}^2 a -r\right) \left(1- \frac{1}{4\binom{r}{k}^2a - a}\right)^{ar} \ge \left(4r\binom{r}{k}^2 a -r\right) \cdot  \left(1- \frac{1}{4\binom{r}{k}^2a - a}\right)^{4\binom{r}{k}^2 a -a} ,  \]
since $4\binom{r}{k}^2a - a \ge ar$. Since the sequence $\left(1-\frac{1}{m}\right)^m$ is increasing and $4\binom{r}{k}^2 a -a \ge 2$, we conclude
\[ \left(4r\binom{r}{k}^2 a -r\right) \cdot  \left(1- \frac{1}{4\binom{r}{k}^2a - a}\right)^{4\binom{r}{k}^2 a -a} \ge  \left(4r\binom{r}{k}^2 a -r\right)\cdot\frac{1}{4} . \]
Now it is not difficult to verify that 
\[ \left(4r\binom{r}{k}^2 a -r\right) \cdot \frac{1}{4} \ge (a-1)(r-k)\binom{r}{k}^2 \] 
and the claim follows.

\section{Concluding remarks}\label{sect:4}

A well known technique that has been proven to be very fruitful in extremal set theory involves the notion of \emph{shifting} (see \cite{Frankl_shifting, FranklToku}). 

Let $\mathcal{H}=(V,\mathcal{\mathcal{E}})$ be an $r$-uniform hypergraph whose vertex set is indexed by the positive integers, i.e., $V=[n]$ for some positive integer $n$, and fix $1\le i<j\le n$. The $(i,j)$-\emph{shift} of $\mathcal{H}$, denoted $S_{i,j}(\mathcal{H})$, is the hypergraph with vertex set $[n]$ and with edges  
\[
S_{i,j}(E)=\begin{cases}
E\setminus\left\{ i\right\} \cup\left\{ j\right\} , & \text{if }i\in E,j\notin E,E\setminus\left\{ i\right\} \cup\left\{ j\right\} \notin\mathcal{E}\\
E, & \text{otherwise.}
\end{cases}
\]
Clearly, for every $1\le i<j\le n$, the hypergraph $S_{i,j}(\mathcal{H})$ is $r$-uniform and it contains the same number of edges as $\mathcal{H}$. Moreover, it can be shown that the $k$-matching number of $\mathcal{H}$ does not increase under an $(i,j)$-shift. 
It is known (see \cite{Frankl_shifting}) that if we keep on shifting a hypergraph then after a finite number of steps we end up with a \emph{stable} hypergraph, that is, a hypergraph $\mathcal{H}$ for which $S_{i,j}(\mathcal{H})=\mathcal{H}$, for all $1\le i<j\le n$. 

The shifting technique is usually combined with induction on the number of vertices in a stable hypergraph and allows to obtain sharp estimates on the cardinality of the families 
$\mathcal{H}[n] = \{E\setminus \{n\} : n\in E\in \mathcal{E}\}$ and $\mathcal{H}(n)= \{E\in\mathcal{E}: n\notin E\}$ (see \cite{Frankl_shifting}).  However, as can already be seen in the proof of Theorem \ref{thm:3}, Problem \ref{prbl:1} appears to depend on estimates on the cardinality of the families $\mathcal{H}[T] = \{E\setminus T : T\subset E \in \mathcal{E}\}$ and $\mathcal{H}(T) = \{E\in \mathcal{E}: T\nsubseteq E\}$, where $T\in\binom{V}{k}$,  and we were not able to adapt standard shifting arguments in this setting. We believe that suitable generalizations of the notion of shifting may provide improvements upon the constant $4r\binom{r}{k}^2$ in Theorem \ref{thm:3} 
and we hope that we will be able to report on that matter in the future.

\end{document}